\newcommand\Tstrut{\rule{0pt}{2.9ex}}         
\theoremstyle{definition}
\newtheorem{thm}{Theorem}
\newtheorem{prop}{Proposition}
\newtheorem{rem}{Remark}
\newtheorem*{defn}{Definition}
\newenvironment{theorem}%
  {\begin{mdframed}[backgroundcolor=gray!20!white,hidealllines=true,innerleftmargin=5pt,innerrightmargin=5pt,innertopmargin=0pt,innerbottommargin=5pt]\begin{thm}}%
  {\end{thm}\end{mdframed}}
\newcommand{\bmat}[1]{\begin{bmatrix}#1\end{bmatrix}}
\newcommand{\suchthat}{\;\ifnum\currentgrouptype=16 \middle\fi|\;}
\newcommand{\set}[2]{\left\{#1\suchthat#2\right\}}
\DeclareMathOperator{\trace}{tr}
\newcommand{\defeq}{\colonequals}
\renewcommand{\d}{\text{d}}
\newcommand{\df}{\nabla\! f}
\renewcommand{\Re}{\text{Re}}
\newcommand{\1}{\mathbf{1}}
\renewcommand{\epsilon}{\varepsilon}
\newcommand{\A}{\bm{A}}
\newcommand{\B}{\bm{B}}
\newcommand{\C}{\bm{C}}
\newcommand{\D}{\bm{D}}
\newcommand{\x}{\bm{x}}
\newcommand{\y}{\bm{y}}
\renewcommand{\u}{\bm{u}}
\newcommand{\f}{\bm{f}}
\newcommand{\n}{\bm{n}}
\newcommand{\F}{\bm{F}}
\newcommand{\G}{\bm{G}}
\newcommand{\tp}{\mathsf{T}}
\newcommand{\K}{\mathcal{K}}
\newcommand{\real}{\mathbb{R}}
\newcommand{\complex}{\mathbb{C}}
\newcommand{\rank}{\text{rank}}
\newcommand{\E}{\mathbb{E}}
\newcommand{\nhphantom}[1]{\sbox0{\ensuremath{#1}}\hspace{-\the\wd0}}
\newcommand{\makesamewidth}[3][c]{\setbox0\hbox{#2}\makebox[\the\wd0][#1]{#3}}
\newcommand{\pushright}[1]{\ifmeasuring@#1\else\omit\hfill$\displaystyle#1$\fi\ignorespaces}
\newcommand{\pushleft}[1]{\ifmeasuring@#1\else\omit$\displaystyle#1$\hfill\fi\ignorespaces}
\crefname{thm}{Theorem}{Theorems}
\crefname{lem}{Lemma}{Lemmas}
\crefname{prop}{Proposition}{Propositions}
\crefname{rem}{Remark}{Remarks}
\crefname{defn}{Definition}{Definitions}
\crefname{cor}{Corollary}{Corollaries}
\renewcommand{\epsilon}{\varepsilon}
\title{\LARGE \bf
Absolute Stability via Lifting and Interpolation
}
\author{Bryan Van Scoy\thanks{B.~Van~Scoy is with the Department of Electrical and Computer Engineering at Miami University, Oxford, OH~45056, USA
{\tt\small bvanscoy@miamioh.edu}} \and
Laurent Lessard\thanks{L.~Lessard is with the Department of Mechanical and Industrial Engineering at Northeastern University, Boston, MA~02115, USA
{\tt\small l.lessard@northeastern.edu}\newline
L.~Lessard acknowledges support from the National Science Foundation under Grants 2136945 and 2139482.}
}
\begin{document}

\maketitle

\begin{abstract}
We revisit the classical problem of absolute stability; assessing the robust stability of a given linear time-invariant (LTI) plant in feedback with a nonlinearity belonging to some given function class. Standard results typically take the form of sufficient conditions on the LTI plant, the least conservative of which are based on O'Shea--Zames--Falb multipliers. We present an alternative analysis based on \textit{lifting} and \textit{interpolation} that directly constructs a Lyapunov function that certifies absolute stability without resorting to frequency-domain inequalities or integral quadratic constraints. In particular, we use linear matrix inequalities to search over Lyapunov functions that are quadratic in the iterates and linear in the corresponding function values of the system in a lifted space.
We show that our approach recovers state-of-the-art results on a set of benchmark problems.
\end{abstract}

\section{Introduction}

We consider the classical absolute stability problem, also known as the Lur'e problem~\cite{lur'e}, illustrated in \cref{fig:system}. A linear time-invariant (LTI) system $G$ with internal state~$x$ is in feedback with a static nonlinearity~$\phi$.

\begin{figure}[ht]
  \centering\includegraphics{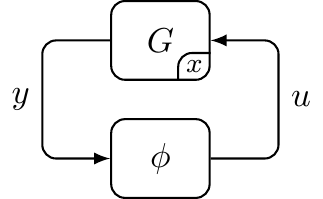}
  \caption{System $G$ with internal state $x$ in feedback with a nonlinearity $\phi$ belonging to a given function class. The \emph{absolute stability problem} is to find conditions on $G$ that ensure stability of the closed-loop system for all initial states and all nonlinearities in the function class.}
  \label{fig:system}
\end{figure}

The standard approach to absolute stability uses Zames--Falb multipliers\footnote{Also known as \textit{O'Shea--Zames--Falb multipliers}.} \cite{zames-falb} and integral quadratic constraints (IQCs) \cite{iqc}, which we review in \cref{sec:interpolation,sec:absolute-stability}. Existence of a suitable multiplier provides a guarantee of robust stability, and one can search over all finite impulse response (FIR) multipliers using convex linear matrix inequalities (LMIs) \cite{zames-falb-convex-search}. There is an abundance of literature on this topic, including tutorials such as ``Zames--Falb Multipliers: Don't Panic'' \cite{zames-falb-dont-panic} that aim to communicate the main results to a broad audience.

While searching over Zames--Falb multipliers is the state-of-the-art, it does not (at least directly) construct a Lyapunov function to certify absolute stability. Instead, the Kalman--Yakubovich--Popov (KYP) lemma \cite{kyp} is typically used to relate feasibility of an LMI to a corresponding frequency domain inequality (FDI). The main result for integral quadratic constraints \cite{iqc} is that this FDI is sufficient for absolute stability (see \cref{sec:absolute-stability}).

In contrast to the standard approach, we propose approaching absolute stability via lifting and interpolation. We lift the system to a higher-dimensional space, and then search for a Lyapunov function in this lifted space using linear matrix inequalities. The LMI is derived from a dissipation inequality whose supply rate is a combination of valid inequalities satisfied by the nonlinearity.

Our approach has several benefits. First, a feasible solution to our LMI directly provides a Lyapunov function. Since our approach is based on a simple dissipation inequality, it generalizes naturally to other settings such as robust or stochastic performance certification (see \cref{sec:beyond:stab}). Also, we relate the set of all valid multipliers for a given function class to interpolation, which provides a systematic way to construct all valid inequalities satisfied by the inputs and outputs of the nonlinearity.

\subsection{Main idea}

To certify absolute stability of the Lur'e system, we propose using a common quadratic-plus-linear Lyapunov function candidate of the form\footnote{This \emph{quadratic-plus-linear} form is similar to the  Lur'e--Postnikov Lyapunov function. The difference is that the lifted state $(\x,\f)$ is augmented to include past input and function values.}
\begin{equation}\label{eq:V}
  V(\x,\f) \defeq \x^\tp P\,\x + p^\tp \F \f
\end{equation}
where $(\x,\f)$ is the state of a \textit{lifted system}, which we describe in \cref{sec:lifting}. The matrix $P$ and vector $p$ are parameters of the Lyapunov function that we search over using a linear matrix inequality, and $\F$ is a fixed matrix.

To show that $V$ is a Lyapunov function, we will search for a quadratic-plus-linear form $\sigma_1(\y_t,\u_t,\f_t)\geq 0$ that is nonnegative over all system trajectories and that satisfies the discrete-time dissipation inequality
\begin{subequations}\label{eq:inequalities}
\begin{align}
  V(\x_{t+1},\f_{t+1}) - V(\x_t,\f_t) + \sigma_1(\y_t,\u_t,\f_t) \leq 0. \label{eq:ineq1}
\end{align}
A Lyapunov function must also be positive definite. This can be enforced by requiring $P\succ 0$ and $p>0$, but a less conservative approach is to find another nonnegative quadratic-plus-linear form $\sigma_2(\y_t,\u_t,\f_t)\geq 0$ such that
\begin{align}
\|\x_t\|^2 - V(\x_t,\f_t) + \sigma_2(\y_t,\u_t,\f_t) \leq 0.
\end{align}
\end{subequations}
Combining the two inequalities in \eqref{eq:inequalities} and using the fact that $\sigma_1$ and $\sigma_2$ are nonnegative along system trajectories,
\[
  \|\x_t\|^2 \leq V(\x_t,\f_t) \leq V(\x_{t-1},\f_{t-1}) \leq \ldots \leq V(\x_0,\f_0),
\]
so the Lyapunov function $V$ certifies absolutely stability.

We characterize the set of all nonnegative quadratic-plus-linear forms in \cref{sec:interpolation}, which we then use in \cref{sec:main} to search over all Lyapunov functions of the form \eqref{eq:V}.

\subsection{Assumptions}

To illustrate our results, we focus on the case where $G$ is a discrete-time finite-dimensional linear time-invariant single-input single-output (SISO) system, and the nonlinearity $\phi:\real\to\real$ is bounded, satisfies $\phi(0)=0$, and is \emph{monotone}, which means that
\[
  0 \leq \bigl(\phi(x) - \phi(y)\bigr) (x-y) \qquad\text{for all }x,y \in \real.
\]
Since the nonlinearity $\phi : \real\to\real$ is monotone, it is the gradient of a scalar-valued function $f : \real\to\real$, that is, $\phi = \df$. Furthermore, $f$ is a closed convex proper function since $\phi$ is monotone; see \cite{monotone}. We assume without loss of generality that $f(0) = 0$. We choose this simple setting to highlight our results, although we emphasize that our results generalize to other classes of nonlinearities.

\section{Quadratic inequalities \& interpolation}\label{sec:interpolation}

We first review standard results concerning quadratic constraints that hold between the input and output of a monotone map $\phi$. We then use interpolation conditions to construct all such inequalities that are also linear in the values of the convex function $f$ whose gradient is $\phi$.

\subsection{Background}

The conventional approach to robust stability analysis is to replace the nonlinearity $\phi$ by valid inequalities, typically quadratic, that hold between its inputs and outputs. Willems and Brockett \cite{willems-brockett} were the first to construct all valid quadratic inequalities for a monotone nonlinearity. The inequalities are described in terms of a \textit{doubly hyperdominant} matrix. A matrix $M = \{M_{ij}\}$ is doubly hyperdominant if $M_{ij}\leq 0$ for all $i\neq j$ and
\[
  \sum_i M_{ij} \geq 0 \quad\text{and}\quad \sum_j M_{ij} \geq 0 \quad\text{for all } i\text{ and }j.
\]
The following result from \cite{willems-brockett} characterizes SISO monotone functions in terms of quadratic inequalities. The result was extended to the MIMO case in \cite{zames-falb-all}.
\begin{prop}\label{prop:quad-form-finite}
  The quadratic inequality
  \begin{equation}\label{eq:quad-form}
    \bmat{u_\ell \\ \vdots \\ u_0}^\tp\! M \bmat{y_\ell \\ \vdots \\ y_0} \geq 0
  \end{equation}
  holds for all finite sets of points such that $u_t = \phi(y_t)$ for all $t=0,1,\ldots,\ell$ for some monotone function $\phi : \real\to\real$ with $\phi(0) = 0$ if and only if $M$ is doubly hyperdominant.
\end{prop}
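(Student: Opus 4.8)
The plan is to prove the two implications separately. Throughout I write the bilinear form as $u^\tp M y = \sum_{i,j} M_{ij} u_i y_j$, and I recall that the data $\{(y_t,u_t)\}$ is reproducible by a monotone $\phi$ with $\phi(0)=0$ exactly when the pairs are monotonically consistent, $(u_s-u_t)(y_s-y_t)\ge 0$, together with $u_t y_t\ge 0$ coming from the point $(0,0)$. I also use that the doubly hyperdominant conditions are invariant under simultaneous permutation of rows and columns.

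For \emph{necessity}, I would extract each defining inequality of a doubly hyperdominant matrix by evaluating $u^\tp M y\ge 0$ on three families of explicit monotone data. To force $M_{ij}\le 0$ for $i\ne j$, set $y_i=1$, $y_j=-t$ with $t>0$, all other $y_k=0$, and take $\phi(\cdot)=\max(0,\cdot)$; then $u=e_i$ and $u^\tp M y = M_{ii}-t\,M_{ij}\ge 0$ for all $t>0$, which gives $M_{ij}\le 0$. To force the column sums $\sum_i M_{ij}\ge 0$, take the saturation $\phi(\cdot)=\min(1,\max(0,\cdot))$ and all $y_k\ge 1$ so that $u=\1$; then $u^\tp M y = \sum_k\bigl(\sum_i M_{ik}\bigr) y_k$, and letting $y_j\to\infty$ with the other $y_k=1$ isolates the $j$th column sum. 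To force the row sums $\sum_j M_{ij}\ge 0$, use a step $\phi$ with threshold just above $1$, set $y_i=1+\delta$ and $y_k=1$ for $k\ne i$ so that $u=e_i$; then $u^\tp M y = \sum_j M_{ij} + \delta M_{ii}$, and sending $\delta\to 0^+$ yields the $i$th row sum. Each of these $\phi$ is genuinely monotone with $\phi(0)=0$, so all three configurations are admissible, and together they establish that the inequality can hold only if $M$ is doubly hyperdominant.

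For \emph{sufficiency}, the key device is to absorb the point $(0,0)$ into the matrix. Given a doubly hyperdominant $M$, I would augment it to a matrix $\tilde M$ of size one larger, indexed by an extra node $0$ for the origin, by setting the new off-diagonal entries to minus the column sums and minus the row sums of $M$, and the new diagonal entry to the total sum. Because $M$ has nonpositive off-diagonals and nonnegative row and column sums, $\tilde M$ has \emph{nonpositive off-diagonal entries and identically zero row and column sums}. Appending the origin point $(y_0,u_0)=(0,0)$ and using $\phi(0)=0$ gives $u^\tp M y = \tilde u^\tp \tilde M \tilde y$, since every term involving index $0$ vanishes. Now, writing $\tilde M_{ij}=-a_{ij}$, a matrix with zero line sums and nonpositive off-diagonals is a balanced nonnegative flow, so by the cycle-decomposition theorem it is a nonnegative combination of simple directed-cycle matrices $C$, where a cycle $i_1\to\cdots\to i_k\to i_1$ places $+1$ on the corresponding diagonal entries and $-1$ on its edges. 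It then suffices to check $\tilde u^\tp C\tilde y\ge 0$ for each cycle. Expanding (indices taken cyclically), $\tilde u^\tp C\tilde y = \sum_m u_{i_m}(y_{i_m}-y_{i_{m+1}}) = \sum_m u_{i_m}y_{i_m} - \sum_m u_{i_m}y_{i_{m+1}}$; since $\phi$ is monotone the sequences $(u_{i_m})$ and $(y_{i_m})$ are similarly ordered, so by the \emph{rearrangement inequality} the sorted pairing dominates the cyclically shifted one, giving $\tilde u^\tp C\tilde y\ge 0$ and hence $u^\tp M y\ge 0$.

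I expect the main obstacle to be the sufficiency direction, specifically the asymmetry of $M$. The naive attempt, decomposing $M$ into diagonal terms $u_iy_i\ge 0$ and symmetric monotonicity terms $(u_i-u_j)(y_i-y_j)\ge 0$, only generates \emph{symmetric} matrices and cannot certify a general (possibly asymmetric) doubly hyperdominant $M$, since the antisymmetric part of the form is not sign-definite on its own. The augmentation-plus-cycle-decomposition reduction is what tames this: it replaces the pairwise inequalities with directed cycles, for which the rearrangement inequality supplies exactly the needed nonnegativity. The remaining routine points are the monotone-interpolation characterization of admissible data and the balanced-flow cycle decomposition, both of which are standard.
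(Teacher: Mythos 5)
Your proof is correct, but there is no in-paper proof to compare it against: the paper imports \cref{prop:quad-form-finite} from Willems and Brockett \cite{willems-brockett} without proof, noting only (in the remark following \cref{prop:quad-linear-form}) that the result could alternatively be recovered by dualizing the interpolation set defined by cyclic monotonicity conditions. Your argument is a sound, self-contained reconstruction of the classical route. For necessity, the three test configurations (ReLU to force $M_{ij}\le 0$, saturation with one $y_j\to\infty$ for column sums, a step function with $y_i=1+\delta$ for row sums) are all legitimate monotone data with $\phi(0)=0$ and isolate exactly the doubly hyperdominant conditions. For sufficiency, the origin-augmentation producing a matrix with zero line sums, the circulation (cycle) decomposition, and the rearrangement inequality on each directed cycle (valid because $u=\phi(y)$ makes the two sequences similarly ordered) together give $u^\tp M y\ge 0$; this is essentially the argument of \cite{willems-brockett}, with the flow-decomposition theorem playing the role of the Birkhoff--von Neumann decomposition. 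It is also, in substance, the dual-cone computation the paper's remark alludes to: your cycle inequalities are precisely the cyclic monotonicity constraints, your augmentation-plus-decomposition shows every doubly hyperdominant matrix is a nonnegative combination of them, and your necessity constructions show the dual cone contains nothing else. Two cosmetic blemishes, neither load-bearing: your opening characterization of reproducible data via $(u_s-u_t)(y_s-y_t)\ge 0$ and $u_ty_t\ge 0$ is not exact (it misses the requirement that $y_s=y_t$ forces $u_s=u_t$), but necessity uses explicit functions and sufficiency assumes data generated by an actual $\phi$, so the gap never matters; and the appended origin node is labeled $0$, clashing with the existing index $0$.
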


\cref{prop:quad-form-finite} provides an exact characterization of all quadratic inequalities that hold between a finite set of input-output pairs of a monotone function. In contrast, Zames--Falb multipliers \cite{zames-falb,willems-brockett} describe a set of inequalities that hold over an \textit{infinite} horizon.

\begin{prop}\label{prop:ZF}
  Suppose $\Pi$ is a stable discrete-time LTI system. Then the quadratic inequality
  \begin{equation}\label{eq:ZF}
    \sum_{t=-\infty}^\infty u_t\,(\Pi y)_t \geq 0
  \end{equation}
  holds for all square-summable signals $u$ and $y$ such that $u_t = \phi(y_t)$ for all $t$ for some monotone function $\phi : \real\to\real$ with $\phi(0)=0$ if and only if the impulse response $\pi$ of $\Pi$ satisfies $\pi_t\leq 0$ for all $t\neq 0$ and $\sum_{t=-\infty}^\infty \pi_t \geq 0$.
\end{prop}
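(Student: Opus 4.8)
The plan is to reduce both directions of the equivalence to the finite-horizon characterization in \cref{prop:quad-form-finite} by truncation, so that "doubly hyperdominant'' for a bi-infinite Toeplitz operator translates exactly into the two stated conditions on $\pi$. The starting observation is that, writing $(\Pi y)_t = \sum_s \pi_{t-s}\,y_s$, the bilinear form in \eqref{eq:ZF} is the Toeplitz form $\sum_{t,s}\pi_{t-s}\,u_t\,y_s$, whose $(t,s)$ entry is $\pi_{t-s}$. The conditions $\pi_k\leq 0$ for $k\neq 0$ and $\sum_k \pi_k \geq 0$ are precisely the infinite-dimensional analogue of double hyperdominance: nonpositive off-diagonal entries together with nonnegative row and column sums.

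For necessity I would test the inequality only on finitely supported signals. If $y$ is supported on $\{0,1,\dots,\ell\}$ then, since $\phi(0)=0$, so is $u$, and \eqref{eq:ZF} collapses to the finite quadratic form with the $(\ell+1)\times(\ell+1)$ matrix $M^{(\ell)}=[\pi_{t-s}]_{t,s=0}^{\ell}$. Because this form is nonnegative for every monotone $\phi$ and every such configuration, the only-if direction of \cref{prop:quad-form-finite} forces $M^{(\ell)}$ to be doubly hyperdominant. Reading off the entries gives $\pi_k\leq 0$ for $0<|k|\leq\ell$, and letting $\ell\to\infty$ yields $\pi_k\leq 0$ for all $k\neq 0$. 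For the sum condition, time-invariance lets me center the window; the sum of a middle row of $M^{(\ell)}$ is the partial sum $\sum_{m=t-\ell}^{t}\pi_m$, whose index window exhausts $\integer$ as $\ell\to\infty$ for $t\approx\ell/2$. Since every row sum of a doubly hyperdominant matrix is nonnegative, passing to the limit gives $\sum_k \pi_k\geq 0$.

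For sufficiency the key fact is that every finite principal truncation of the Toeplitz operator is again doubly hyperdominant. Indeed, a row sum of the $N$-truncation equals the full sum $\sum_k \pi_k$ minus the dropped entries; because the diagonal entry $\pi_0$ always survives, every dropped entry is some $\pi_k$ with $k\neq 0$, hence nonpositive, so the truncated row sum is at least $\sum_k\pi_k\geq 0$. By the if direction of \cref{prop:quad-form-finite}, each truncated form is nonnegative for all monotone $\phi$. I would then pass to the limit: since $\Pi$ is stable, its impulse response lies in $\ell^1$ and $\Pi$ is bounded on $\ell^2$, so truncating $u$ and $y$ to $\{-N,\dots,N\}$ converges in $\ell^2$ and the truncated forms converge to $\sum_{t=-\infty}^\infty u_t\,(\Pi y)_t$, which is therefore nonnegative.

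The hard part will be the sufficiency direction, in two places. First one must verify that truncation preserves double hyperdominance, which is exactly the row-sum bookkeeping above and relies on the diagonal term $\pi_0$ never being discarded. Second, one must justify interchanging the limit with the infinite sum; this is where stability (an $\ell^1$ impulse response) and the square-summability of $u$ and $y$ are essential, guaranteeing that the double-truncated bilinear forms converge to the infinite-horizon value $\sum_{t=-\infty}^\infty u_t\,(\Pi y)_t$ rather than to some spurious tail contribution.
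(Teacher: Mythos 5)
Your proof is correct, and both directions hold up under scrutiny. Note that the paper itself does not prove \cref{prop:ZF} at all --- it quotes the result from the Zames--Falb and Willems--Brockett literature --- so there is no internal proof to compare against; your reduction to \cref{prop:quad-form-finite} by truncation is the natural self-contained route, and it is in fact the same machinery the paper invokes later (in its time-domain argument of Section~III, the finite Toeplitz matrix built from an FIR multiplier is asserted, without proof, to be doubly hyperdominant so that \cref{prop:quad-form-finite} applies; your sufficiency step supplies exactly the row-sum bookkeeping behind that assertion). The delicate points are all handled: in the necessity direction, extending finitely supported configurations by zero is legitimate precisely because $\phi(0)=0$, the off-diagonal entries of the truncated Toeplitz matrix give $\pi_k\le 0$ for every $k\neq 0$ by taking $\ell \ge |k|$, and the centered-window row sums converge to $\sum_k \pi_k$ because stability gives an absolutely summable impulse response; in the sufficiency direction, the observation that the diagonal index is never discarded (so every dropped entry is nonpositive and each truncated row and column sum dominates $\sum_k \pi_k \ge 0$) is the key lemma, and the limit passage is justified since an $\ell^1$ impulse response makes $\Pi$ bounded on $\ell^2$, so $\langle u^N, \Pi y^N\rangle \to \langle u, \Pi y\rangle$ by Cauchy--Schwarz. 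One cosmetic remark: the stacking order in \eqref{eq:quad-form} is reversed (indices $\ell$ down to $0$) relative to your natural indexing, but this is harmless because double hyperdominance is invariant under simultaneous reversal of rows and columns, a point worth one sentence if you write this up formally.
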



In contrast to \eqref{eq:quad-form} and \eqref{eq:ZF}, which only involve the inputs and outputs of the nonlinearity $\phi$, our analysis will use inequalities that also depend on the values of the convex function $f$ whose gradient is $\phi$ (which always exists since $\phi$ is monotone and one-dimensional by assumption). To this end, we characterize all tuples $(M,m)$ for which the \textit{quadratic-plus-linear} inequality
\begin{equation}\label{eq:quad-linear-form}
  \bmat{y_\ell \\ \vdots \\ y_0}^\tp\! M \bmat{u_\ell \\ \vdots \\ u_0} + m^\tp \bmat{f_\ell \\ \vdots \\ f_0} \geq 0
\end{equation}
holds for all sets of points such that $u_t = \df(y_t)$ and $f_t = f(y_t)$ for some convex function $f : \real\to\real$ with $\df(0)=0$ and $f(0)=0$. We shall see that such conditions naturally arise as the dual of conditions for convex interpolation.

\subsection{Interpolation}

We now show how to systematically derive the set of all quadratic-plus-linear inequalities \eqref{eq:quad-linear-form} using \textit{interpolation}.

\begin{defn}
  A set of points $(y_i,u_i,f_i)$ for $i=0,\ldots,\ell$ is \textit{interpolable by a convex function that passes through the origin} if there exists a convex function $f : \real\to\real$ with $f(0)=0$ and $\df(0)=0$ such that $u_i=\df(y_i)$ and $f_i = f(y_i)$ for all $i=0,\ldots,\ell$.
\end{defn}

The following result provides necessary and sufficient conditions for interpolation by a convex function that passes through the origin; see \cite[Thm.~1]{interpolation}.

\begin{prop}\label{prop:convex_interpolation}
  The set $(y_i,u_i,f_i)$ for $i=0,\ldots,\ell$ is interpolable by a convex function that passes through the origin if and only if $f_i \geq f_j + u_j\,(y_i-y_j)$, $u_i y_i\geq f_i$, and $f_i\geq 0$ for all $i,j=0,\ldots,\ell$.
\end{prop}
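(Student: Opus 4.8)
The plan is to prove both implications by viewing the three listed conditions as the ordinary first-order convexity inequalities for the data set \emph{augmented} by the origin triple $(y,u,f)=(0,0,0)$. Writing the generic inequality $f_a \ge f_b + u_b(y_a-y_b)$: among the original indices it is exactly the first condition; with $b$ the origin it reads $f_i \ge 0$, the third condition; and with $a$ the origin it reads $0 \ge f_i - u_i y_i$, the second condition. Hence the three conditions together say precisely that the augmented set obeys all pairwise convex interpolation inequalities, and the origin constraints $f(0)=0$ and $\df(0)=0$ correspond to the origin being one of the interpolated points.

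For necessity, suppose a convex $f$ with $f(0)=0$ and $\df(0)=0$ interpolates the data, and use the subgradient inequality $f(y_a) \ge f(y_b) + \df(y_b)(y_a-y_b)$. Taking $a=i,\ b=j$ gives the first condition; taking $a=i,\ b=0$ and using $f(0)=\df(0)=0$ gives $f_i \ge 0$; and taking $a=0,\ b=i$ gives $0 \ge f_i - u_i y_i$. So necessity amounts to three specializations of the defining inequality of convexity.

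For sufficiency I would exhibit the explicit interpolant
\[
  f(y) \defeq \max\Bigl\{\, 0,\ \max_{i=0,\ldots,\ell}\bigl(f_i + u_i\,(y-y_i)\bigr) \Bigr\},
\]
a maximum of finitely many affine functions, hence convex and finite everywhere. At $y=y_k$ the term $i=k$ equals $f_k$, the first condition bounds every other affine term above by $f_k$, and the third condition bounds the constant $0$ above by $f_k$, so $f(y_k)=f_k$. At $y=0$ the second condition bounds each affine value $f_i-u_iy_i$ above by $0$, so $f(0)=0$. Since each affine piece touches $f$ from below at its own node, $u_i$ is a subgradient of $f$ at $y_i$ and $0$ is a subgradient at the origin.

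The main obstacle is upgrading \emph{subgradient} to \emph{gradient}: the construction only certifies $u_i \in \partial f(y_i)$, whereas the definition asks for $u_i=\df(y_i)$ with $\df$ the derivative, and $f$ may have a kink at a node where the inequalities are tight. I would close this gap either by reading $\df$ as a subgradient selection — which is all the downstream dissipation argument needs, since any selection of $\partial f$ is a monotone map — or by a short mollification that rounds the finitely many corners of $f$ into a differentiable convex interpolant realizing the prescribed derivatives while preserving $f(0)=0$ and $\df(0)=0$.
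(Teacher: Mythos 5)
Your argument is, in substance, the standard proof of this result; note that the paper itself offers no proof of \cref{prop:convex_interpolation} (it is quoted from \cite[Thm.~1]{interpolation}), and the cited proof proceeds exactly as you do: augment the data with the origin triple $(0,0,0)$ so that the three listed conditions become the pairwise convex-interpolation inequalities, obtain necessity from the subgradient inequality, and obtain sufficiency from the pointwise maximum of the affine minorants. Your verification that the max construction interpolates (value $f_k$ at $y_k$, value $0$ at the origin, each affine piece supporting from below) is correct.

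The one genuine problem is your proposed fallback for the gradient-versus-subgradient issue: the mollification route cannot work, because under the strict gradient reading the proposition is actually \emph{false}, not merely harder to prove. Take the single data point $(y_0,u_0,f_0)=(1,1,1)$. All three conditions hold (the first with equality for $i=j$, and $u_0y_0 = 1 \geq f_0$, $f_0 \geq 0$). But any convex $f$ with $f(0)=0$, $f(1)=1$, and $1$ a (sub)gradient at $1$ satisfies $f(y)\geq 1 + (y-1) = y$ everywhere, while convexity gives $f(y)\leq (1-y)f(0)+yf(1)=y$ on $[0,1]$; hence $f(y)=y$ on all of $[0,1]$, so the right derivative of $f$ at the origin equals $1$, and no interpolant can be differentiable at $0$ with $\df(0)=0$. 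The kink forced at a node where the inequalities are tight is intrinsic to the data, not an artifact of your max construction, so no smoothing that preserves the interpolation constraints exists. You must therefore commit to your first option: read $u_i=\df(y_i)$ and $\df(0)=0$ as $u_i\in\partial f(y_i)$ and $0\in\partial f(0)$. This is how the cited theorem is stated, and it is consistent with the paper's own conventions: a monotone $\phi:\real\to\real$ may have jumps, so the paper's identification $\phi=\df$ is itself a subgradient selection, and everything downstream (\cref{prop:quad-linear-form} and the dissipation argument in \cref{thm}) uses only the inequalities, never differentiability.
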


The interpolation conditions in \cref{prop:convex_interpolation} are \textit{linear} in the function values and \textit{quadratic} in the gradient and the point at which it is evaluated. We can therefore represent the interpolation conditions in terms of the \textit{Gram matrix} and vector of function values,
\begin{equation}\label{eq:G-f}
  \G = \bmat{y_\ell \\ \vdots \\ y_0}\,\bmat{u_\ell \\ \vdots \\ u_0}^\tp
  \quad\text{and}\quad
  \f = \bmat{f_\ell \\ \vdots \\ f_0}.
\end{equation}
In particular, a set of points is interpolable by a convex function that passes through the origin if and only the tuple $(\G,\f)$ is in the \textit{interpolation set}, defined as
\begin{multline*}
  \K = \{(\G,\f) \mid f_i \geq f_j + G_{ij} - G_{jj}, \ G_{ii}\geq f_i,\\
    \text{ and }f_i\geq 0\text{ for all }i,j \text{ and }\rank(\G)=1\}
\end{multline*}
Given a Gram matrix $\G$ and vector $\f$ in the interpolation set, we can factor $\G$ as in \eqref{eq:G-f} since it has rank one. Then, from the definition of the interpolation set, the points $(y_i,u_i,f_i)$ satisfy the interpolation conditions in \cref{prop:convex_interpolation}, so there exists a convex function that interpolates the points.
While the interpolation set exactly characterizes the Gram matrices and function values that can be generated by a convex function, the quadratic-plus-linear inequalities \eqref{eq:quad-linear-form} are naturally characterized by the \textit{dual} of the interpolation set,
\[
  \K^* = \set{(M,m)}{\trace(M^\tp\G) + m^\tp \f\geq 0, \ \forall (\G,\f)\in\K}.
\]
This is precisely the set of all tuples $(M,m)$ such that the inequality \eqref{eq:quad-linear-form} holds for all sets of points that are interpolable by a convex function that passes through the origin. Any such parameter is referred to as a \textit{multiplier}. An important implication of this is that, since the set of multipliers is a dual cone, it is always a convex set \cite{boyd}. This will allow us to efficiently search over all multipliers using convex optimization.

To find the dual cone, we take a nonnegative linear combination of all inequalities defining the interpolation set,
\[
  \sum_{i=1}^\ell \sum_{j=1}^\ell \lambda_{ij}\,(f_i-f_j-G_{ij}+G_{jj}) + \gamma_i\,(G_{ii}-f_i) + \delta_i\,f_i \geq 0
\]
where the coefficients $\lambda_{ij}$, $\gamma_i$, and $\delta_i$ are nonnegative. Assembling the coefficients into the matrix $\Lambda = \{\lambda_{ij}\}$ and vectors $\gamma=\{\gamma_i\}$ and $\delta=\{\delta_i\}$, we can write this inequality compactly as
$\trace(M^\tp \G) + m^\tp \f \geq 0$,
where the matrix $M$ and vector $m$ are given as follows:
\begin{align*}
  M &= \text{diag}(\Lambda^\tp \1) - \Lambda + \text{diag}(\gamma) \\
  m &= (\Lambda-\Lambda^\tp)\,\1 - \gamma + \delta
\end{align*}
Therefore, the dual cone of the interpolation set for the class of convex functions that pass through the origin is
\begin{multline}\label{eq:dual}
  \K^* = \{(M,m) \mid M^\tp\1 \geq 0, \ M\1 + m \geq 0, \\
  \text{and } M_{ij}\leq 0\text{ for all }i\neq j\}.
\end{multline}
We summarize this result as follows.
\begin{prop}\label{prop:quad-linear-form}
  The quadratic-plus-linear inequality \eqref{eq:quad-linear-form} holds for all sets of points $(y_i,u_i,f_i)$ for $i=0,\ldots,\ell$ that are interpolable by a convex function that passes through the origin if and only if the tuple $(M,m)$ is in the dual cone $\K^*$ in~\eqref{eq:dual}.
\end{prop}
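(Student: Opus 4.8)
The plan is to read the statement as an explicit computation of the dual cone and to prove the two inclusions separately. First I would invoke \cref{prop:convex_interpolation}: a tuple $(y_i,u_i,f_i)$ is interpolable by a convex function through the origin exactly when the associated $(\G,\f)$ from \eqref{eq:G-f} lies in $\K$, and since $\G=\y\u^\tp$ the left-hand side of \eqref{eq:quad-linear-form} is precisely $\trace(M^\tp\G)+m^\tp\f$. Hence \eqref{eq:quad-linear-form} holds over all interpolable tuples if and only if $(M,m)\in\K^*$, and it suffices to show that $\K^*$ coincides with the polyhedral cone $\mathcal{D}$ cut out by the conditions in \eqref{eq:dual}.

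For $\mathcal{D}\subseteq\K^*$ I would make the change of variables that realizes any $(M,m)\in\mathcal{D}$ as a nonnegative combination of the defining inequalities of $\K$. Setting $\gamma=M^\tp\1\geq 0$, $\delta=M\1+m\geq 0$, $\lambda_{ij}=-M_{ij}\geq 0$ for $i\neq j$, and $\lambda_{ii}=0$, a direct calculation verifies the identities $M=\text{diag}(\Lambda^\tp\1)-\Lambda+\text{diag}(\gamma)$ and $m=(\Lambda-\Lambda^\tp)\1-\gamma+\delta$ stated in the text. Consequently $\trace(M^\tp\G)+m^\tp\f$ equals $\sum_{i,j}\lambda_{ij}(f_i-f_j-G_{ij}+G_{jj})+\sum_i\gamma_i(G_{ii}-f_i)+\sum_i\delta_i f_i$, which is nonnegative for every $(\G,\f)\in\K$. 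Note this direction never uses the rank-one constraint.

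The reverse inclusion $\K^*\subseteq\mathcal{D}$ is the crux, and here the rank-one (hence nonconvex) constraint in $\K$ blocks a direct appeal to polyhedral duality. My plan is instead to certify each family of inequalities in \eqref{eq:dual} by feeding specific rank-one interpolable tuples into $\trace(M^\tp\G)+m^\tp\f\geq 0$ and isolating one scalar inequality at a time. To force $M_{ab}\leq 0$ for $a\neq b$, I would place all points at the origin except a point $a$ at some $y_a<0$ with subgradient $0$ and a point $b$ at the origin carrying a subgradient $u_b>0$ (admissible via a kink at the origin, as permitted by \cref{prop:convex_interpolation}); then the only surviving term is $M_{ab}\,y_a u_b<0$ in sign, forcing $M_{ab}\leq 0$. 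To force $(M^\tp\1)_a\geq 0$, take all $y_i$ equal to a common positive value with a single nonzero subgradient at index $a$, so that all $f_i=0$ and the form collapses to $yu\,(M^\tp\1)_a$. To force $(M\1+m)_a\geq 0$, take all subgradients equal to a common $u$ and place only point $a$ away from the origin; then $f_a=uy$ and the form collapses to $uy\,(M\1+m)_a$. In each case $\G=\y\u^\tp$ is genuinely rank one, and interpolability is checked directly against the conditions of \cref{prop:convex_interpolation}.

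I expect the main obstacle to be exactly this last step: because $\K$ is nonconvex one must argue that testing against rank-one data already exhausts $\K^*$, i.e.\ that the explicit witnesses above recover every defining inequality of $\mathcal{D}$. The delicate point in building these witnesses is the treatment of the origin, since allowing a point to sit at $y=0$ with a nonzero subgradient is what makes the off-diagonal test nondegenerate; I would confirm via \cref{prop:convex_interpolation} that such kinked configurations are indeed interpolable. Once the three witness families are validated, the two inclusions give $\K^*=\mathcal{D}$ and hence the proposition.
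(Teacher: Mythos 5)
Your proof is correct, and for the hard direction it is actually more careful than the paper's own treatment. The paper's proof of \cref{prop:quad-linear-form} is the derivation immediately preceding it: it parametrizes nonnegative combinations of the inequalities defining $\K$, arriving at exactly the identities you verify for $(M,m)$ in terms of $(\Lambda,\gamma,\delta)$, and then simply declares the resulting polyhedral cone to be $\K^*$. That establishes your inclusion $\mathcal{D}\subseteq\K^*$ by the same computation you give, but the reverse inclusion $\K^*\subseteq\mathcal{D}$ is left implicit, resting on a Farkas-type polyhedral duality argument that, as you correctly observe, does not apply verbatim: $\K$ carries the nonconvex constraint $\rank(\G)=1$, so a priori the dual cone of $\K$ could be strictly larger than the dual of the polyhedron obtained by dropping the rank constraint. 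Your witness construction fills exactly this gap, and all three families check out: the kinked configuration (one point at $y_a<0$ with zero subgradient, one point at the origin with subgradient $u_b>0$) satisfies the three inequalities of \cref{prop:convex_interpolation} and isolates $M_{ab}\,y_au_b\geq 0$, forcing $M_{ab}\leq 0$; the common-point configuration yields $yu\,(M^\tp\1)_a\geq 0$ with $yu>0$; and the common-subgradient configuration yields $uy\,(M\1+m)_a\geq 0$ with $uy>0$ (in each case the Gram matrix is a genuine rank-one outer product, and interpolability follows from \cref{prop:convex_interpolation}, e.g.\ via $f(t)=u\max(t,0)$-type functions). What the paper's route buys is brevity and a recipe that generalizes to other function classes (dualize the interpolation conditions); what yours buys is a rigorous proof that testing only against rank-one interpolable data already exhausts $\K^*$, i.e., that the nonconvexity of $\K$ does not enlarge the dual cone---a point the paper glosses over entirely.
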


Here, we focused on characterizing quadratic-plus-linear inequalities that hold for points that are interpolable by a scalar convex function that passes through the origin. We emphasize, however, that our construction generalizes to other function classes and other types of inequalities. We summarize this general principle linking interpolation with the set of multipliers as follows:

\begin{center}
  \parbox{0.6\linewidth}{\centering\it The set of all multipliers is the dual of the interpolation set.}
\end{center}

\begin{rem}
  Instead of using the interpolation conditions in \cref{prop:convex_interpolation} that involve function values, we could equivalently use the \textit{cyclic monotonicity} conditions that are also necessary and sufficient for convex interpolation but only involve $y_i$ and $u_i$ (but not $f_i$). Then the dual of the interpolation cone would only involve a matrix $M$ (and not a vector $m$) and would consist of all doubly hyperdominant matrices, recovering \cref{prop:quad-form-finite}.
\end{rem}

\begin{rem}
  There are several generalizations of slope-restricted nonlinearities to the multi-input multi-output case, including diagonal, repeated, or full-block cases \cite{zames-falb-all}. The MIMO generalization of monotonicity is defined by the inequality
  \[
    0 \leq \left( \phi(x)-\phi(y) \right)^\tp \left( x-y\right) \quad \text{for all }x,y\in\real^n.
  \]
  However, care must be taken because a MIMO monotone nonlinearity is \emph{not necessarily} the gradient of a convex function $f:\real^n \to \real$. If it is, then the interpolation conditions in \cref{prop:convex_interpolation} become $f_i \geq f_j + u_j^\tp (y_i-y_j)$. Otherwise, we can use the definition itself as the set of valid inequalities: $(u_i-u_j)^\tp(y_i-y_j) \geq 0$. The ability to work directly with valid inequalities means that there is no need to translate information about the function into a frequency-domain IQC. This is particularly useful in scenarios where the nonlinearity is implicitly characterized by the inequalities that it satisfies, as in iterative algorithm analysis \cite{lessard}. See \cite{interpolation} for the interpolation conditions for many other function classes.
\end{rem}

\section{Absolute stability via Zames--Falb}\label{sec:absolute-stability}

We now summarize how to use Zames--Falb multipliers to verify absolute stability of the Lur'e system.

\subsection{Frequency domain formulation}\label{sec:time-domain}

Applying Parseval's theorem, the time-domain quadratic inequality \eqref{eq:ZF} is equivalent to the frequency-domain quadratic inequality
$
  \int u(z)^* \Pi(z)\, y(z)\,\d z \geq 0
$,
where the integral is over the unit circle in the complex plane. This is an \textit{integral quadratic constraint} \cite{iqc}.

Since the signals are related by $y(z) = G(z)\,u(z)$ where $G(z)$ is the transfer function of the LTI system in \cref{fig:system}, the IQC can be written as
$\int \Pi(z)\,G(z)\,\|u(z)\|^2\,\d z \geq 0$.
The main IQC result is that the system is absolutely stable for all signals satisfying the IQC if and only if the following frequency-domain inequality\footnote{Some authors use the negative feedback convention, in which case the direction of the inequality \eqref{eq:FDI} is flipped.} holds \cite{iqc,willems-brockett}.
\begin{equation}\label{eq:FDI}
  \Re\{\Pi(z)\,G(z)\} < 0
  \quad\text{for all }z\in\complex\text{ with }|z|=1.
  \tag{FDI}
\end{equation}
Therefore, a sufficient condition for robust stability of the interconnection is the existence of a multiplier $\Pi(z)$ that satisfies the conditions in \cref{prop:ZF} and \eqref{eq:FDI}.

\subsection{Time domain formulation}

The frequency-domain inequality \eqref{eq:FDI} is not conducive to numerical verification since it must hold over an infinite number of points. By applying the Kalman--Yakubovich--Popov lemma \cite{kyp}, we can transform \eqref{eq:FDI} to an equivalent linear matrix inequality, which is more amenable to computation.

Suppose the Zames--Falb multiplier $\Pi$ has finite impulse response $\pi_t$ of length $\ell$, that is, $\pi_t = 0$ for $|t|>\ell$. Define the $2\,(\ell+1)\times 2$ transfer matrix
\[
  \Psi(z) \defeq \bmat{1 & z^{-1} & \ldots & z^{-\ell} & 0 & 0 & \ldots & 0 \\ 0 & 0 & \ldots & 0 & 1 & z^{-1} & \ldots & z^{-\ell}}^\tp
\]
and define the matrix
\begin{equation}\label{eq:M-ZF}
  N = \bmat{
    \pi_0 & \pi_1 & \ldots & \pi_\ell \\
    \pi_{-1} & 0 & \ldots & 0 \\
    \vdots & \vdots & \ddots & \vdots \\
    \pi_{-\ell} & 0 & \ldots & 0}.
\end{equation}
Then, \eqref{eq:FDI} can be written as
\[
  \bmat{G(z) \\ 1}^* \Psi(z)^* \bmat{0 & N^\tp \\ N & 0} \Psi(z) \bmat{G(z) \\ 1} < 0.
\]
Let $(\A,\B,\C,\D)$ be a minimal realization of the system
\begin{equation}\label{eq:lifted-system}
  \left[\begin{array}{c|c} \A & \B \\ \hline\Tstrut \C & \D \end{array}\right]
  =
  \Psi(z) \bmat{G(z) \\ 1}.
\end{equation}

Applying the KYP lemma (specifically, the positive-real lemma), we see that \eqref{eq:FDI} is equivalent to the existence of $P = P^\tp\succ 0$ satisfying
\begin{equation}\label{eq:LMI}
\addtolength{\arraycolsep}{-3pt}
  \bmat{\A^\tp P \A - P & \A^\tp P \B \\ \B^\tp P \A & \B^\tp P \B} + \frac{1}{2} \bmat{\C & \D}^\tp\! \bmat{0 & N^\tp \\ N & 0}\! \bmat{\C & \D} \prec 0. \tag{LMI}
\end{equation}
This is a linear matrix inequality, which can be readily solved using interior-point solvers, for example.

To summarize, if \eqref{eq:LMI} is feasible, then we have found an FIR multiplier of length $\ell$ that certifies \eqref{eq:FDI}, which then implies absolute stability.

We can also verify absolute stability directly from the feasibility of \eqref{eq:LMI} as follows. Suppose that \eqref{eq:LMI} is feasible for some positive definite matrix $P$ and some FIR multiplier $\Pi$ of length $\ell$. Multiply the LMI on the right and left by $(\x_t, u_t)$ and its transpose, respectively, where $\x_t$ is the state of \eqref{eq:lifted-system} at time $t$. Doing so, we obtain the dissipation inequality
\begin{equation}\label{eq:ZF-dissipation}
  \x_{t+1}^\tp P\, \x_{t+1} - \x_t^\tp P\, \x_t + \bmat{u_t \\ \vdots \\ u_{t-\ell}}^\tp\! N \bmat{y_t \\ \vdots \\ y_{t-\ell}} < 0.
\end{equation}
We now sum the dissipation inequality over $t$ from $0$ to~$T$. The first two terms form a telescoping sum so that
\begin{multline*}
  0 > \x_{T+1}^\tp P\,\x_{T+1} - \x_0^\tp P\,\x_0 \\
  + \bmat{u_T \\ \vdots \\ u_1 \\ u_0}^\tp \bmat{\pi_0 & \ldots & \pi_{T-1} & \pi_T \\ \vdots & \ddots & \vdots & \vdots \\ \pi_{1-T} & \ldots & \pi_0 & \pi_1 \\ \pi_{-T} & \ldots & \pi_{-1} & \pi_0} \bmat{y_T \\ \vdots \\ y_1 \\ y_0}.
\end{multline*}
The conditions on the multiplier ensure that the Toeplitz matrix is doubly hyperdominant, so the quadratic form is nonnegative from \cref{prop:quad-form-finite}. We conclude that $\x_T^\tp P\,\x_T < \x_0^\tp P\,\x_0$ for all $T>0$, so the Lur'e system is absolutely stable since $P$ is positive definite.

To motivate our approach, suppose that we required $N$ to be doubly hyperdominant instead of having the structure in \eqref{eq:M-ZF}. Then we could apply \cref{prop:quad-form-finite} directly to the dissipation inequality \eqref{eq:ZF-dissipation} to obtain $\x_{t+1}^\tp P\,\x_{t+1} < \x_t^\tp P\,\x_t$ for all $t$. Then $\x^\tp P\,\x$ is a Lyapunov function for the system that verifies absolute stability. While this approach directly constructs a Lyapunov function, we find that it is quite conservative for small values of $\ell$. If we also include the function values in the analysis (as we describe in the following section), however, we directly construct a Lyapunov function using values for $\ell$ similar to those needed using Zames--Falb multipliers.

\section{Main result}\label{sec:main}

We now describe our main result, which is an LMI that directly searches over Lyapunov functions that are quadratic in the iterates of the system and linear in the corresponding values of the convex function $f$ whose gradient is $\phi$. Existence of such a Lyapunov function certifies absolute stability of the system. As we will show in \cref{sec:numerical-validation}, our analysis matches state-of-the-art results while also explicitly constructing a Lyapunov function.

\subsection{Lifting}\label{sec:lifting}

The main idea behind our analysis is to augment the state of the system in \cref{fig:system} so that it becomes more amenable to analysis. This procedure is known as a \textit{lifting}, and the transformed system is called the \textit{lifted system}. The idea of lifting is commonly used in polynomial optimization~\cite{sos-review}, kernel methods~\cite{kernal-methods}, and Koopman theory~\cite{brunton}, where an intractable problem is lifted to a higher-dimensional space in which it becomes tractable.

\cref{fig:lift} provides an overview of the lifting procedure, where a lifting maps trajectories of the original system to those of the lifted system. While direct analysis of the original system is conservative, the analysis becomes less conservative in the lifted space; in particular, feasibility of a linear matrix inequality for the lifted system provides a Lyapunov function that guarantees absolute stability of the original system.


\begin{figure}[htb]
  \centering\includegraphics{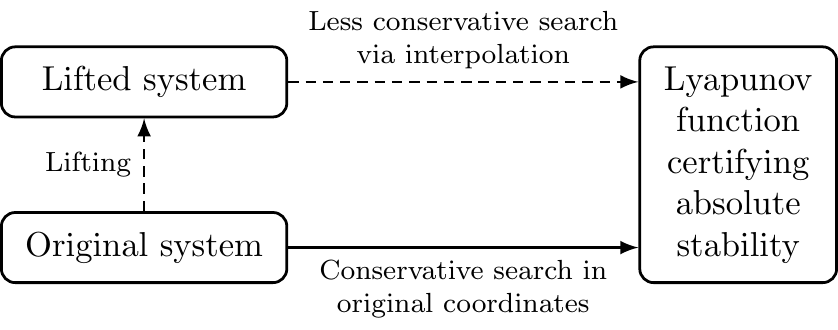}
  \caption{Overview of the lifting procedure. The state of the original system in \cref{fig:system} is lifted to a higher-dimensional space in which absolute stability can be verified using a linear matrix inequality.}
  \label{fig:lift}
\end{figure}

Given a nonnegative integer $\ell$, referred to as the \textit{lifting dimension}, the lifted system is a dynamical system that describes the evolution of the the \textit{lifted iterates}
\begin{align}\label{eq:lifted-iterates}
  \y_t &= \bmat{y_t \\ \vdots \\ y_{t-\ell}}\!, &
  \u_t &= \bmat{u_t \\ \vdots \\ u_{t-\ell}}\!, &
  \f_t &= \bmat{f_t \\ \vdots \\ f_{t-\ell}}\! &
\end{align}
where $u_t = \df(y_t)$ are the gradients and $f_t = f(y_t)$ the corresponding function values of the original system. These trajectories satisfy the \textit{lifted dynamics}
\begin{equation}\label{eq:lifted-sys}
  \bmat{\x_{t+1} \\ \y_t \\ \u_t} = \bmat{\A & \B \\ \C & \D} \bmat{\x_t \\ u_t} \quad\text{and}\quad
  \F \f_{t+1} = \F_+ \f_t
\end{equation}
where the state-space matrices $(\A,\B,\C,\D)$ are defined in \eqref{eq:lifted-system} and the matrices describing the evolution of the function values are defined as
\begin{equation*}
  \F = \bmat{0_{\ell\times 1} & I_\ell} \qquad\text{and}\qquad
  \F_+ = \bmat{I_\ell & 0_{\ell\times 1}}.
\end{equation*}
The key feature of the lifted system is that each element of the lifted iterates satisfies the uncertainty, thereby capturing information about the uncertainty across multiple time steps of the original system. Larger lifting dimensions produce higher-dimensional lifted iterates that are able to capture more information about the uncertainty and therefore produce a less conservative analysis.

\subsection{Linear matrix inequality}

We now construct an LMI that uses the set of multipliers in \eqref{eq:dual} to search over the class of quadratic-plus-linear Lyapunov functions in \eqref{eq:V} for the lifted system \eqref{eq:lifted-sys} that satisfy the dissipation and positivity inequalities \eqref{eq:inequalities}.

\begin{theorem}\label{thm}
  Consider the system in \cref{fig:system} with state dimension $n$. Given a lifting dimension $\ell$, construct state-space matrices $(\A,\B,\C,\D,\F,\F_+)$ for the lifted system \eqref{eq:lifted-system}, and denote the lifted state dimension as  $\n$.
  
  If there exists a symmetric $P\in\real^{\n\times\n}$, vector $p\in\real^{\ell+1}$, and multipliers $(M_1,m_1)$ and $(M_2,m_2)$ in the dual cone $\K^*$ defined in \eqref{eq:dual} such that the following linear matrix inequality is feasible:
  \begin{subequations}\label{lmi}
    \begin{align*}
      \addtolength{\arraycolsep}{-4pt}
      \bmat{\A^\tp P \A\!-\! P & \A^\tp P \B \\ \B^\tp P \A & \B^\tp P \B}\! + \frac{1}{2} \bmat{\C & \D}^\tp\! \bmat{0 & M_1^\tp \\ M_1 & 0}\! \bmat{\C & \D} &\preceq 0 \\
      (\F_+ - \F)^\tp p + m_1 &\leq 0 \\
      \addtolength{\arraycolsep}{-3pt}
      \bmat{I-P & 0 \\ 0 & 0} + \frac{1}{2} \bmat{\C & \D}^\tp\! \bmat{0 & M_2^\tp \\ M_2 & 0} \!\bmat{\C & \D} &\preceq 0 \\
      -\F^\tp p + m_2 &\leq 0
    \end{align*}
  \end{subequations}
  then for any initial state $x_0\in\real^n$ and any bounded monotone function $\phi : \real\to\real$ with $\phi(0)=0$, the system in \cref{fig:system} is Lyapunov stable.
\end{theorem}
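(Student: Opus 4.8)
The plan is to turn a feasible point of \eqref{lmi} into the candidate Lyapunov function \eqref{eq:V}, verify the two inequalities in \eqref{eq:inequalities} along every trajectory of the lifted system \eqref{eq:lifted-sys}, and then telescope. The four rows of \eqref{lmi} split into two matched pairs: the first two rows certify the dissipation inequality \eqref{eq:ineq1} and the last two certify the positivity inequality, with the matrix rows handling the quadratic (in $\x_t,u_t$) part and the vector rows handling the linear (in $\f_t$) part of each.

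First I would establish the dissipation inequality. Fix a trajectory and set $\sigma_1(\y_t,\u_t,\f_t)=\y_t^\tp M_1\u_t+m_1^\tp\f_t$. Multiplying the first row of \eqref{lmi} on the left and right by $(\x_t,u_t)$ and using $\bmat{\x_{t+1}\\\y_t\\\u_t}=\bmat{\A&\B\\\C&\D}\bmat{\x_t\\u_t}$ from \eqref{eq:lifted-sys}, the matrix term becomes $\x_{t+1}^\tp P\,\x_{t+1}-\x_t^\tp P\,\x_t$ and the Schur term reproduces the quadratic part of $\sigma_1$, namely $\y_t^\tp M_1\u_t$ (modulo the transpose convention discussed below); hence this quadratic part is $\le 0$. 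For the linear part, the shift structure $\F\f_{t+1}=\F_+\f_t$ gives $p^\tp\F(\f_{t+1}-\f_t)=\bigl((\F_+-\F)^\tp p\bigr)^\tp\f_t$, so the second row $(\F_+-\F)^\tp p+m_1\le 0$ together with $\f_t\ge 0$ forces $\bigl((\F_+-\F)^\tp p+m_1\bigr)^\tp\f_t\le 0$. Adding the quadratic and linear contributions yields exactly $V(\x_{t+1},\f_{t+1})-V(\x_t,\f_t)+\sigma_1(\y_t,\u_t,\f_t)\le 0$, which is \eqref{eq:ineq1}.

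The positivity inequality follows by the same mechanism applied to the last two rows with $\sigma_2(\y_t,\u_t,\f_t)=\y_t^\tp M_2\u_t+m_2^\tp\f_t$: the third row gives $\|\x_t\|^2-\x_t^\tp P\,\x_t+\y_t^\tp M_2\u_t\le 0$, and the fourth row $-\F^\tp p+m_2\le 0$ with $\f_t\ge 0$ controls the remaining term $-p^\tp\F\f_t+m_2^\tp\f_t$, giving $\|\x_t\|^2-V(\x_t,\f_t)+\sigma_2\le 0$. Two facts then close the argument. Nonnegativity $\sigma_1,\sigma_2\ge 0$ holds because every window $(\y_t,\u_t,\f_t)$ consists of triples $(y_{t-k},u_{t-k},f_{t-k})$ produced by one and the same convex function $f$ and is therefore interpolable by a convex function through the origin, so \cref{prop:quad-linear-form} applies to the multipliers $(M_1,m_1),(M_2,m_2)\in\K^*$. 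Nonnegativity $\f_t\ge 0$ holds because $f(0)=0$ and $\df(0)=\phi(0)=0$ make the origin a global minimizer, so each $f_{t-k}=f(y_{t-k})\ge 0$. Dropping the nonnegative $\sigma_1$ and telescoping \eqref{eq:ineq1} gives $V(\x_t,\f_t)\le V(\x_0,\f_0)$, while the positivity inequality gives $\|\x_t\|^2\le V(\x_t,\f_t)$; chaining these bounds the lifted state uniformly in $t$, and since the original plant state is a fixed linear image of the lifted state, the system in \cref{fig:system} is Lyapunov stable.

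I expect the main obstacle to be the trajectory-wise nonnegativity of the supply rates rather than the algebra of matching LMI rows to \eqref{eq:inequalities}. One must verify that each lifted window really is a set of points interpolable by a single convex function passing through the origin, so that \cref{prop:convex_interpolation,prop:quad-linear-form} apply; this is what legitimizes using multipliers from $\K^*$ and makes the linear terms in $\f_t$ provably harmless once their signs are fixed by the vector inequalities. Secondary care is needed in fixing the transpose convention relating $M_1,M_2$ to the Schur complements so that $\y_t^\tp M\u_t$ appears with the correct orientation, and in the bookkeeping of the initial lifted window, where past values $y_{t-\ell},\dots$ are taken to be initialized consistently so that the telescoping sum starts cleanly.
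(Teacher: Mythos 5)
Your proposal is correct and follows essentially the same route as the paper's proof: multiply the matrix inequalities by $(\x_t,u_t)$, pair the vector inequalities with $\f_t$, invoke \cref{prop:quad-linear-form} (via membership in $\K^*$ and interpolability of the lifted window by $f$ itself) to get nonnegativity of the supply rates, and telescope. The only difference is one of explicitness: you spell out details the paper leaves implicit, namely that $\f_t\geq 0$ (which justifies taking the inner product of the vector inequalities with $\f_t$) and the initialization of the lifted window, both of which are handled correctly.
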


\begin{proof}
  Consider a trajectory of the original system along with its corresponding lifted iterates \eqref{eq:lifted-iterates}. Multiply the first LMI on the right and left by the vector $(\x_t,u_t)$ and its transpose, respectively, and add the result to the inner product of the second inequality with $\f_t$ to obtain
  \begin{equation*}
    V(\x_{t+1},\f_{t+1}) - V(\x_t,\f_t) \leq 0,
  \end{equation*}
  where the function $V$ is defined in \eqref{eq:V} and we used that the inequality $\y_t^\tp M_1 \u_t + m_1^\tp \f_t \geq 0$ holds from \cref{prop:quad-linear-form} since the lifted iterates are interpolable by a convex function that passes through the origin and the multiplier $(M_1,m_1)$ is in the dual cone $\K^*$. A similar computation with the last two inequalities gives
  \begin{equation*}
    \|\x_t\|^2 - V(\x_t,\f_t) \leq 0.
  \end{equation*}
  Combining these, we have that
  \[
    \|\x_t\|^2 \leq V(\x_t,\f_t) \leq V(\x_{t-1},\f_{t-1}) \leq \ldots \leq V(\x_0,\f_0),
  \]
  which proves that the system is Lyapunov stable.
\end{proof}





\section{Numerical validation}\label{sec:numerical-validation}

We tested our approach on the examples from \cite[Table~I]{zames-falb-convex-search}, which are shown in \cref{tab:ex_plants} below.

\begin{table}[ht]
    \centering
    \renewcommand{\arraystretch}{1.2}
    \begin{tabular}{cc} \toprule
        Ex. &  Plant $G(z)$ \\ \midrule
        1 & $\frac{0.1z}{z^2-1.8z+0.81}$\\
        2 & $\frac{z^3-1.95z^2+0.9z+0.05}{z^4-2.8z^3+3.5z^2-2.412z+0.7209}$ \\
        3 & $-\frac{z^3-1.95z^2+0.9z+0.05}{z^4-2.8z^3+3.5z^2-2.412z+0.7209}$ \\
        4 & $\frac{z^4-1.5z^3+0.5z^2-0.5z+0.5}{4.4z^5-8.957z^4+9.893z^3-5.671z^2+2.207z-0.5}$\\
        5 & $\frac{-0.5z+0.1}{z^3-0.9z^2+0.79z+0.089}$\\
        6 & $\frac{2z+0.92}{z^2-0.5z}$\\
        7 & $\frac{1.341z^4-1.221z^3+0.6285z^2-0.5618z+0.1993}{z^5-0.935z^4+0.7697z^3-1.118z^2+0.6917z-0.1352}$\\ \bottomrule
    \end{tabular}
    \caption{Numerical examples from \cite[Table I]{zames-falb-convex-search}.}
    \label{tab:ex_plants}
\end{table}

In each example, the system $G(z)$ is in negative feedback with a slope-restricted nonlinearity in $(0,\alpha)$, so the nonlinearity satisfies $0 \leq \frac{\phi(x)-\phi(y)}{x-y} \leq \alpha$ for all $x,y\in\real$. The goal is to find the largest $\alpha$ such that we can ensure absolute stability of the closed-loop system. To apply \cref{thm}, we first use a loop-shifting transformation. Namely, the following statements are equivalent:
\begin{enumerate}[i)]
    \item Absolute stability of $G$ in negative feedback with a slope-restricted nonlinearity in $(0,\alpha)$.
    \item Absolute stability of $-(1+\alpha G)$ in positive feedback with a monotone nonlinearity.
\end{enumerate}
We find the largest feasible $\alpha$ by performing a bisection search over $\alpha$ and repeatedly applying \cref{thm} to test absolute stability. For each example, we were able to recover state-of-the-art results found using the IQC approach with FIR Zames--Falb multiplier search as in \cite{zames-falb-convex-search}, and the lifting dimension required was comparable to the number of Zames--Falb multipliers needed in the IQC approach. 
Results are shown in \cref{tab:ex_results}.
\begin{table}[ht]
    \centering
    \begin{tabular}{cS[table-format=3.4]cc} \toprule
        Ex. & {$\alpha$}  & {$(n_b,n_f)$} & {$\ell$} \\ \midrule
        1 & 12.9960 & $(1,0)$ & 1 \\
        2 &  0.8027 & $(1,4)$ & 4 \\
        3 &  0.3054 & $(0,1)$ & 1 \\
        4 &  3.8240 & $(0,4)$ & 4 \\
        5 &  2.4475 & $(0,1)$ & 1 \\
        6 &  0.9114 & $(1,2)$ & 2 \\
        7 &  0.4347\tablefootnote{It was noted in \cite{zames-falb-convex-search} that $\alpha =0.4922$ is achieved if $n_b=n_f=25$. Our numerical solver was error-prone and unreliable for such large LMIs, so we simulated a suboptimal value instead.}  & $(3,3)$ & 3 \\ \bottomrule
    \end{tabular}
    \caption{Comparison of the IQC approach vs.\ our lifting approach on the examples from \cref{tab:ex_plants}. Both achieve the same $\alpha$. Here, $n_b$ and $n_f$ are the fewest causal and anticausal Zames--Falb multipliers needed, and $\ell$ is the smallest lifting dimension needed to achieve $\alpha$.}
    \label{tab:ex_results}
\end{table}

We now explain one of the examples in greater detail. Consider Example~6 with $\alpha=0.91$. We begin with any realization of $-(1+\alpha G)$. For example, we could pick
\[
-(1+\alpha G) = \left[\begin{array}{c|c}
  A & B \\ \hline\Tstrut C & D
\end{array}\right]
=
\left[\begin{array}{cc|c}
  0.5 & 0 & 2 \\
  1.0 & 0 & 0 \\ \hline\Tstrut
  -0.91 & -0.4186 & -1
\end{array}\right]\!.
\]
Construct the lifted realization in \eqref{eq:lifted-sys} with lifted state $\x_t = (x_{t-2},u_{t-2},u_{t-1})$ and lifting dimension $\ell=2$. Solving the LMI from \cref{thm}, we obtain the feasible point
\begin{small}
\begin{gather*}
P = \bmat{
  1.4483 &  -0.2173 &  -2.4073 &  -2.4262\\
 -0.2173 &   0.8523 &  -2.6369 &   0.1214\\
 -2.4073 &  -2.6369 &   2.4142 &  -1.5938\\
 -2.4262 &   0.1214 &  -1.5938 &   0.4756},
 \\
M_1 = \bmat{
  8.6813 &  -8.6813 &  -0.0000\\
 -0.0000 &   5.8115 &  -5.8115\\
 -2.5025 &  -0.0000 &   2.5278},
\\
 M_2 = \bmat{
 11.2412 &  -3.3521 &  -1.6564\\
 -1.6595 &  10.6892 &  -2.7451\\
 -1.3351 &  -1.5047 &   5.9290},
\\
p = \bmat{
 -6.1534\\
 -3.2837}\!,
 \;
 m_1 = \bmat{
 -6.1788\\
  2.8698\\
  3.2837}\!,
  \;
  m_2 = \bmat{
 -8.2467\\
 -5.8325\\
 -0.0000}\!.
\end{gather*}
\end{small}

Consequently, a Lyapunov function that certifies robust stability of Example~6 with $\alpha=0.91$ is given by
\[
  V(\x_t,\f_t) = \bmat{x_{t-2}\\u_{t-2}\\u_{t-1}}^\tp P \bmat{x_{t-2}\\u_{t-2}\\u_{t-1}}
+ p^\tp \bmat{f_{t-2} \\ f_{t-1}}.
\]
The function $V$ satisfies \eqref{eq:inequalities} for all trajectories of the system and all slope-restricted  $\phi$ in the sector $(0,\alpha)$. Note that $P$ is an indefinite matrix, yet the feasibility of the LMI ensures that $V$ is positive over all trajectories. Note also that $V$ is realization-dependent, so using a different realization $(A,B,C,D)$ would transform $P$ accordingly.

\begin{rem}
Different (sometimes simpler) Lyapunov functions can be sought by imposing sparsity constraints on the LMI variables. For example, the LMI from \cref{thm} remains feasible if we set $p=0$ (no function value dependence), but increase the lifting dimension to $\ell=3$.
\end{rem}

\section{Beyond absolute stability}\label{sec:beyond:stab}

Our proposed approach using lifting and interpolation has the benefit of producing a stability certificate in the form of a Lyapunov function and associated dissipation inequality. Moreover, our approach operates entirely in the time domain using finite sums, which leads to simple and intuitive proofs. We only treated the case of absolute stability for discrete-time SISO systems, but our approach generalizes easily to other use cases. We now discuss a few of these potential extensions.

\paragraph{Continuous time.}
Lifting works analogously in continuous time if we replace the time lags with integrals. For example, where the lifted state might include a lagged input $u_{t-k}$ in discrete time, we would  use the input $\idotsint u(t)$ (integrated $k$ times) in continuous time. We still use a Lyapunov function of the form \eqref{eq:V}, except the associated dissipation inequality \eqref{eq:ineq1}  becomes
\[
0 \geq \frac{\mathrm{d}}{\mathrm{d}t}V(\x(t)) + \sigma(\y(t),\u(t),\f(t)),
\]
which leads to $\left[\begin{smallmatrix} \A^\tp P \A - P & \A^\tp P \B \\ \B^\tp P \A & \B^\tp P \B \end{smallmatrix}\right] \mapsto \left[\begin{smallmatrix} \A^\tp P + P \A & P \B \\ \B^\tp P & 0 \end{smallmatrix}\right]$ in \cref{thm}.

\paragraph{Exponential stability.}
\cref{thm} can be generalized to certify exponential stability by introducing a scaling factor $\rho$ in \eqref{eq:inequalities}. We state the result below.

\begin{prop}[exponential stability]\label{prop:ext1}
  If the following dissipation inequalities are satisfied:
  \begin{align*}
  V(\x_{t+1},\f_{t+1}) - \rho^2 V(\x_t,\f_t) + \sigma_1(\y_t,\u_t,\f_t) & \leq 0 \\
  \|\x_t\|^2 - V(\x_t,\f_t) + \sigma_2(\y_t,\u_t,\f_t) & \leq 0
  \end{align*}
  then $\|\x_t\| \leq V(\x_0,\f_0)^{1/2} \rho^t$ for all $t\geq 0$. In particular, if $\rho < 1$, then $G$ is robustly exponentially stable.
\end{prop}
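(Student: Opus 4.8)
The plan is to mirror the proof of \cref{thm}, inserting the contraction factor $\rho^2$ into the telescoping argument. First I would record that the two supply rates are nonnegative along every trajectory. Exactly as in \cref{thm}, each $\sigma_i$ is the quadratic-plus-linear form $\y_t^\tp M_i \u_t + m_i^\tp \f_t$ built from a multiplier $(M_i,m_i)\in\K^*$, and the lifted iterates $(\y_t,\u_t,\f_t)$ are interpolable by a convex function through the origin, so \cref{prop:quad-linear-form} gives $\sigma_i(\y_t,\u_t,\f_t)\geq 0$. This nonnegativity is the only property of the $\sigma_i$ that the argument uses, and it lets me discard both supply-rate terms from the two hypotheses.

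Discarding $\sigma_2\geq 0$ from the second inequality yields the pointwise bound $\|\x_t\|^2 \leq V(\x_t,\f_t)$ for every $t$; evaluated at $t=0$ this gives $V(\x_0,\f_0)\geq\|\x_0\|^2\geq 0$, so the square root $V(\x_0,\f_0)^{1/2}$ in the conclusion is well-defined. Discarding $\sigma_1\geq 0$ from the first inequality yields the one-step contraction $V(\x_{t+1},\f_{t+1})\leq\rho^2\,V(\x_t,\f_t)$. A routine induction then gives $V(\x_t,\f_t)\leq\rho^{2t}\,V(\x_0,\f_0)$, and chaining this with $\|\x_t\|^2\leq V(\x_t,\f_t)$ produces $\|\x_t\|^2\leq\rho^{2t}\,V(\x_0,\f_0)$. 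Taking square roots gives the claimed bound $\|\x_t\|\leq\rho^t\,V(\x_0,\f_0)^{1/2}$. When $\rho<1$ the right-hand side decays geometrically to zero; since the argument holds for every admissible monotone $\phi$, this is robust exponential stability.

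I do not anticipate a genuine obstacle, since the structure is identical to \cref{thm} with $-P$ effectively replaced by $-\rho^2 P$ in the dissipation step. The only points requiring care are confirming that $V(\x_0,\f_0)\geq 0$ (so the square-root bound is meaningful), which follows from the positivity inequality at $t=0$, and checking that the factor $\rho^2$ survives the telescoping cleanly, which it does because each step contributes one factor of $\rho^2$. If one wishes to turn the hypotheses into a searchable LMI as in \cref{thm}, the corresponding change is to replace the $(1,1)$ block $\A^\tp P\A - P$ with $\A^\tp P\A - \rho^2 P$; but this is an encoding detail and is not needed for the decay estimate itself.
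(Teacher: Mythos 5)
Your proof is correct and follows exactly the route the paper intends: it is the proof of \cref{thm} with the contraction factor $\rho^2$ carried through the telescoping step, using nonnegativity of $\sigma_1,\sigma_2$ along trajectories to obtain $V(\x_{t+1},\f_{t+1})\leq\rho^2 V(\x_t,\f_t)$ and $\|\x_t\|^2\leq V(\x_t,\f_t)$, then inducting and taking square roots. Your added checks (that $V(\x_0,\f_0)\geq 0$ so the square root is well-defined, and that $\rho^2\geq 0$ lets the factor survive the induction) are exactly the right points of care and introduce no gap.
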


If the task is to find the fastest certifiable exponential rate (smallest possible $\rho$), this can be efficiently achieved by performing a bisection search on $\rho$ \cite{lessard}.

\paragraph{Robust performance.}
The lifting approach can be used to certify a variety of performance criteria beyond absolute stability. Essentially, any performance measure compatible with dissipativity can be used. For example, consider the system of \cref{fig:system_aug}, where the LTI system $G$ is augmented to include a performance channel $w\to z$. Similar to how the dissipation inequalities \eqref{eq:inequalities} lead to proving absolute stability via \cref{thm}, we can state analogous results for quadratic performance and performance under stochastic disturbances.

\begin{figure}[ht]
  \centering\includegraphics{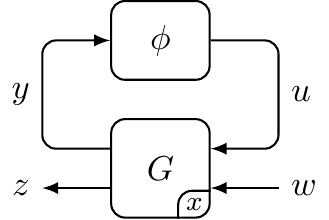}
  \caption{System $G$ with internal state $x$ in feedback with a nonlinearity $\phi$ and a performance channel $w\to z$.}
  \label{fig:system_aug}
\end{figure}

\begin{prop}[quadratic performance]\label{prop:ext2}
If the following dissipation inequalities are satisfied for all signals $w$:
\begin{align*}
V(\x_{t+1},\f_{t+1}) - V(\x_t,\f_t) + \sigma_1(\y_t,\u_t,\f_t)   &\leq \sigma_p(w_t,z_t)\\
\|\x_t\|^2 - V(\x_t,\f_t) + \sigma_2(\y_t,\u_t,\f_t)  & \leq 0
\end{align*}
then $\sum_{t=0}^T \sigma_p(w_t,z_t) \geq 0$ for all $T\geq 0$.
For example, if $\sigma_p(w_t,z_t) \defeq \gamma^2 \|w_t\|^2-\|z_t\|^2$, then $G$ has a robust $\ell_2$ gain from $w$ to $z$ of $\gamma$.
\end{prop}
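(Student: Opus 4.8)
The plan is to replicate, almost verbatim, the telescoping argument in the proof of \cref{thm}, the only difference being that the supply rate $\sigma_p$ is now carried through to the right-hand side rather than set to zero. The one fact I rely on throughout is that, along any trajectory of the augmented system, both $\sigma_1(\y_t,\u_t,\f_t)$ and $\sigma_2(\y_t,\u_t,\f_t)$ are nonnegative: each has the quadratic-plus-linear form $\y_t^\tp M_i \u_t + m_i^\tp \f_t$ built from a multiplier $(M_i,m_i)\in\K^*$, and the lifted iterates are interpolable by a convex function through the origin, so \cref{prop:quad-linear-form} gives nonnegativity.

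First I would discard the nonnegative term $\sigma_1$ from the first hypothesis to obtain the pointwise bound
\[
  V(\x_{t+1},\f_{t+1}) - V(\x_t,\f_t) \leq \sigma_p(w_t,z_t),
\]
and sum over $t=0,\ldots,T$. The left-hand side telescopes, yielding
\[
  V(\x_{T+1},\f_{T+1}) - V(\x_0,\f_0) \leq \sum_{t=0}^T \sigma_p(w_t,z_t).
\]
To turn this into the claimed inequality I would use the second hypothesis (again dropping the nonnegative $\sigma_2$) to get $\|\x_{T+1}\|^2 \leq V(\x_{T+1},\f_{T+1})$, so the terminal value of $V$ is nonnegative. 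Under zero initial conditions $\x_0=0$, at-rest past values force $\f_0=0$ and hence $V(\x_0,\f_0)=0$, so $0 \leq V(\x_{T+1},\f_{T+1}) \leq \sum_{t=0}^T \sigma_p(w_t,z_t)$, which is the desired conclusion.

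For the $\ell_2$-gain corollary I would substitute $\sigma_p(w_t,z_t) = \gamma^2\|w_t\|^2 - \|z_t\|^2$ into the summed inequality and rearrange to $\sum_{t=0}^T \|z_t\|^2 \leq \gamma^2 \sum_{t=0}^T \|w_t\|^2$. Letting $T\to\infty$ then gives $\|z\|_2 \leq \gamma\,\|w\|_2$ for every square-summable $w$, i.e.\ the robust $\ell_2$ gain from $w$ to $z$ is at most $\gamma$.

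The argument is otherwise routine, so the only genuine subtlety is the boundary term $V(\x_0,\f_0)$. The bare statement $\sum_{t=0}^T \sigma_p \geq 0$ requires $V(\x_0,\f_0)\leq 0$, and for the gain interpretation this is exactly the standard zero-initial-state assumption that makes $V(\x_0,\f_0)=0$; I would therefore state that assumption explicitly. The complementary point worth flagging is that absorbing this term relies on the terminal value $V(\x_{T+1},\f_{T+1})$ being nonnegative, which is precisely what the second dissipation inequality supplies — so both hypotheses are used, even though $\sigma_p$ appears in only the first.
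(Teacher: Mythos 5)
Your proof is correct and follows exactly the approach the paper intends: the paper states \cref{prop:ext2} without proof, but its framework (the telescoping dissipation argument in the proof of \cref{thm}, with $\sigma_p$ carried to the right-hand side and the second inequality supplying nonnegativity of the terminal value $V(\x_{T+1},\f_{T+1})$) is precisely what you reproduce. Your observation that the conclusion requires $V(\x_0,\f_0)\leq 0$ --- i.e.\ the standard zero-initial-state assumption, which forces $V(\x_0,\f_0)=0$ --- is a legitimate caveat that the paper's statement leaves implicit.
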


\begin{prop}[stochastic performance]\label{prop:ext3}
Suppose $w_t$ is i.i.d. zero-mean random noise with $\mathrm{cov}(w_t) = \Sigma$, and 
the following dissipation inequalities are satisfied:
\begin{align*}
V(\x_{t+1},\f_{t+1}) - V(\x_t,\f_t) + \sigma_1(\y_t,\u_t,\f_t)  + \|z_t\|^2 &\leq 0\\
- V(\x_t,\f_t) + \sigma_2(\y_t,\u_t,\f_t)  & \leq 0 \\
\trace\left( P \B_w \Sigma \B_w^\tp \right) & \leq \gamma^2,
\end{align*}
where $\B_w$ comes from the lifted state equations for $G$, analogous to \eqref{eq:lifted-sys}. Namely, $\x_{t+1} = \A \x_t + \B u_t + \B_w w_t$.
Then, $\limsup_{T\to\infty}\E \frac{1}{T} \sum_{t=0}^{T-1}\|z_t\|^2 \leq \gamma^2$. This allows us to certify a robust $\mathcal{H}_2$ performance. 
\end{prop}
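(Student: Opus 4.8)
The plan is to run the standard dissipativity-to-$\mathcal{H}_2$ argument, with the one twist that the expectation over the noise must be taken at exactly the right place. First I would note that, as in the proof of \cref{thm}, the supply rates $\sigma_1$ and $\sigma_2$ are nonnegative along every trajectory: since the multipliers lie in $\K^*$ and the lifted iterates $(\y_t,\u_t,\f_t)$ are interpolable by a convex function through the origin, \cref{prop:quad-linear-form} gives $\sigma_i(\y_t,\u_t,\f_t)\geq 0$. The second inequality then reads $V(\x_t,\f_t)\geq\sigma_2(\y_t,\u_t,\f_t)\geq 0$, so $V$ is nonnegative along trajectories; this is the only positivity I will need at the endpoints of the telescoping sum.

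Next I would address the noise. The key observation is that the first dissipation inequality is an LMI certificate for the \emph{nominal} update $\x_{t+1}=\A\x_t+\B u_t$, whereas the actual state evolves as $\x_{t+1}=\A\x_t+\B u_t+\B_w w_t$. Expanding $\x_{t+1}^\tp P\,\x_{t+1}$ with the true dynamics produces a cross term linear in $w_t$ and a quadratic term $w_t^\tp\B_w^\tp P\B_w w_t$; the linear part of $V$ (the $p^\tp\F\f_{t+1}$ piece) is unaffected because $\F\f_{t+1}$ collects past function values $f_t,\ldots,f_{t+1-\ell}$, none of which depend on $w_t$. Conditioning on the history $\mathcal{F}_t$ and using that $w_t$ is zero-mean and independent of $(\x_t,u_t)$, the cross term vanishes and the quadratic term contributes $\trace(\B_w^\tp P\B_w\Sigma)=\trace(P\B_w\Sigma\B_w^\tp)$. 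Combining this with the first dissipation inequality and discarding the nonnegative $\sigma_1$ yields
\[
  \E\bigl[V(\x_{t+1},\f_{t+1})\mid\mathcal{F}_t\bigr]-V(\x_t,\f_t)+\|z_t\|^2 \leq \trace(P\B_w\Sigma\B_w^\tp)\leq\gamma^2,
\]
where the last bound is exactly the third hypothesis.

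Finally I would take full expectations via the tower property, sum over $t=0,\ldots,T-1$, and telescope the $V$ terms to obtain $\E[V(\x_T,\f_T)]-V(\x_0,\f_0)+\sum_{t=0}^{T-1}\E\|z_t\|^2\leq T\gamma^2$. Dropping the nonnegative term $\E[V(\x_T,\f_T)]$, dividing by $T$, and sending $T\to\infty$ gives $\limsup_{T\to\infty}\E\frac{1}{T}\sum_{t=0}^{T-1}\|z_t\|^2\leq\gamma^2$, the claimed $\mathcal{H}_2$ bound. I expect the main obstacle to be bookkeeping the stochastic term correctly: justifying that the LMI certifies the \emph{nominal} dissipation inequality, that instantiating it along the noisy trajectory injects precisely the trace term (the cross term dying by zero-mean independence and the function-value part being measurable with respect to $\mathcal{F}_t$), and handling the measure-theoretic details of passing from the conditional one-step inequality to the averaged limit.
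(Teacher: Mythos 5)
Your proof is correct; note that the paper states \cref{prop:ext3} without proof (the extensions in \cref{sec:beyond:stab} are only asserted, with pointers to the IQC literature), so the only basis for comparison is the paper's proof of \cref{thm}, and your argument is precisely the natural stochastic extension of that template: nonnegativity of $\sigma_1,\sigma_2$ along (noisy) trajectories via \cref{prop:quad-linear-form}, a one-step conditional-expectation dissipation inequality, telescoping, and discarding $\E[V(\x_T,\f_T)]\geq 0$ at the endpoint. Your two points of care are exactly the right resolutions of the statement's ambiguities: reading the first inequality as a certificate for the nominal update $\x_{t+1}=\A\x_t+\B u_t$ (a literal pathwise reading along the noisy trajectory would make the trace hypothesis superfluous and the conclusion trivial), and observing that $p^\tp\F\f_{t+1}$ involves only $f_t,\ldots,f_{t+1-\ell}$ and hence is unaffected by $w_t$, so that conditioning kills the cross term and injects exactly $\trace\bigl(P\B_w\Sigma\B_w^\tp\bigr)$ from the quadratic part.
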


\crefrange{prop:ext1}{prop:ext3} have similar analogs with IQCs \cite{scherer_iqc_2016,scherer}. A recent application of similar results to the analysis of iterative optimization algorithms can be found in \cite{BVS_LL_speed_robustness_tradeoff}.

\appendix

\bibliographystyle{abbrv}
{\footnotesize\bibliography{references}}

\end{document}